\newtheorem{thm}{Theorem}\numberwithin{thm}{section}
\newtheorem{prop}[thm]{Proposition}
\newtheorem{cor}[thm]{Corollary}
\newtheorem{rema}[thm]{Remark}
\newtheorem{defi}[thm]{Definition}
\newtheorem*{thm2}{Theorem}
\newtheorem*{con2}{Conjecture}
\begin{document}
\begin{center}
\huge{No phantoms in the derived category of curves over arbitrary fields, and derived characterizations of Brauer--Severi varieties}\\[1cm]
\end{center}
\begin{center}

\large{Sa$\mathrm{\check{s}}$a Novakovi$\mathrm{\acute{c}}$}\\[0,4cm]
\small{\emph{To my wife Anastasia with thankfulness and love}}\\[0,6cm]
{\small August 2017}\\[0,3cm]
\end{center}

{\small \textbf{Abstract}.
In this paper we show that the derived category of Brauer--Severi curves satisfies the Jordan--H\"older property and cannot have quasi-phantoms, phantoms or universal phantoms. In this way we obtain that quasi-phantoms, phantoms or universal phantoms cannot exist in the derived category of smooth projective curves over a field $k$. Moreover, we show that a $n$-dimensional Brauer--Severi variety is completely characterized by the existence of a full weak exceptional collection consisting of pure vector bundles of length $n+1$, at least in characteristic zero. We conjecture that Brauer--Severi varieties $X$ satisfy $\mathrm{rdim}_{\mathrm{cat}}(X)=\mathrm{ind}(X)-1$, provided period equals index, and prove this in the case of curves, surfaces and for Brauer--Severi varieties of index at most three. We believe that the results for curves are known to the experts. We nevertheless give the proofs, adding to the literature.  
\begin{center}
\tableofcontents
\end{center}
\section{Introduction}
The bounded derived category $D^b(X)$ of a smooth projective variety has been recognized as an interesting invariant encoding a lot of geometric information. For instance, there are links between the semiorthogonal decomposition of $D^b(X)$ to the birational geometry of $X$ (see for instance \cite{KUZZ},\cite{KUZ1} and references therein). There are also links between the existence of special types of semiorthogonal decompositions of $D^b(X)$ and the existence of $k$-rational points (see \cite{AB}, \cite{NO2}, \cite{NOO} and references therein). Recently, special examples of semiorthogonal decompositions have been constructed. Namely, it was proved for several complex algebraic surfaces $S$ that $D^b(S)$ admits decompositions in which one component has trivial Hochschild homology and finite or trivial Grothendieck group (see \cite{AO}, \cite{BGS},\cite{BGKS},\cite{CL},\cite{GSH},\cite{GO} and \cite{LEE}). The first were called quasi-phantom whereas the latter were called phantoms. There is also the notion of universal phantom categories. There are several reasons one is interested in the (non-) existence of (quasi-) phantoms in $D^b(X)$ some of which are related to questions about noetherianity, rationality or the failure of the Jordan--H\"older property. Up to now it is still an open problem whether there is a phantom in $D^b(\mathbb{P}^2)$. We believe that the following result is known to the experts, but, to our best knowledge, stated nowhere. The prove we give is intrinsic and based on properties of central simple algebras.
\begin{thm}
Let $C$ be a smooth projective curve over a field $k$. Then $D^b(C)$ cannot have quasi-phantoms, phantoms or universal phantoms.
\end{thm}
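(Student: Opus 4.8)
The plan is to bypass Hochschild homology altogether and prove the stronger statement that \emph{every} nonzero admissible subcategory $\mathcal{A}\subseteq D^b(C)$ has infinite Grothendieck group $K_0(\mathcal{A})$. This at once excludes phantoms and quasi-phantoms, both of which are required to have $K_0(\mathcal{A})$ finite, and hence also universal phantoms, since a universal phantom is in particular a phantom. As (quasi-/universal) phantoms are by definition admissible subcategories occurring in semiorthogonal decompositions of $D^b(C)$, the theorem would follow.

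I would use two standard facts. First, $K_0$ is additive along semiorthogonal decompositions, so for $\mathcal{A}$ admissible the inclusion realizes $K_0(\mathcal{A})$ as a direct summand of $K_0(C)$; in particular $K_0(\mathcal{A})\hookrightarrow K_0(C)$. Second, a smooth curve is hereditary, so every object of $D^b(C)$ is isomorphic to the direct sum of the shifts of its cohomology sheaves. Since an admissible subcategory is thick --- closed under shifts and direct summands --- any nonzero $\mathcal{A}$ must contain a nonzero coherent sheaf: picking $0\neq E\in\mathcal{A}$ and writing $E\cong\bigoplus_i H^i(E)[-i]$, some nonzero direct summand $H^{i_0}(E)[-i_0]$ lies in $\mathcal{A}$, hence so does the sheaf $H^{i_0}(E)$.

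It then remains to observe that the class $[F]$ of a nonzero coherent sheaf $F$ on $C$ has infinite order in $K_0(C)$. If $F$ has positive generic rank, this is detected by the rank homomorphism $K_0(C)\to\mathbb{Z}$. If $F$ is a torsion sheaf it has $0$-dimensional support, so $H^{>0}(C,F)=0$ and $\chi(C,F)=\dim_k H^0(C,F)=\sum_{x}\mathrm{length}_{\mathcal{O}_{C,x}}(F_x)\,[\kappa(x):k]>0$, whence $[F]$ is detected by the Euler characteristic $\chi(C,-)\colon K_0(C)\to\mathbb{Z}$. In either case $n[F]\neq 0$ for every $n\geq 1$, so $K_0(\mathcal{A})$, which contains $[F]$, is infinite. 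The argument is uniform in the genus and valid over an arbitrary field.

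I do not expect a real obstacle along this route: the only points needing a careful word are the direct-summand decomposition of $K_0$ along a semiorthogonal decomposition and the formality of complexes over a hereditary abelian category, both of which are standard. A more hands-on alternative, closer to the rest of the paper, would be to first invoke that $D^b(C)$ is indecomposable for $g(C)\geq 1$ (so there is nothing to prove), and then treat genus $0$ via the fact that such a $C$ is a Brauer--Severi curve $\mathrm{SB}(A)$ with $A$ a quaternion algebra, together with the semiorthogonal decomposition $D^b(\mathrm{SB}(A))=\langle D^b(k),D^b(A)\rangle$; classifying all semiorthogonal decompositions of $D^b(\mathrm{SB}(A))$ one finds that each component is equivalent to $D^b(B)$ for a central simple $k$-algebra $B$, and such a category has $K_0(B)\cong\mathbb{Z}$ by Morita invariance and $HH_0(B)\cong B/[B,B]\cong k$, which yields simultaneously the Jordan--H\"older property and the absence of phantoms. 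On this second route the genuine difficulty --- and the place where the structure theory of central simple algebras is really used --- is the classification of semiorthogonal decompositions of $D^b(\mathrm{SB}(A))$.
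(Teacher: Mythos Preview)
Your argument is correct and takes a genuinely different route from the paper's. The paper proceeds by cases on the genus: for $g\geq 1$ it invokes Kawatani--Okawa to conclude that $D^b(C)$ admits no non-trivial semiorthogonal decomposition; for $g=0$ it uses that $C$ is a Brauer--Severi curve, base-changes an arbitrary admissible subcategory to $k^s$, appeals to the classification of semiorthogonal decompositions of $D^b(\mathbb{P}^1)$, and then identifies every admissible component with some $D^b(A^{\otimes i})$ for the associated central simple algebra $A$. Your approach, by contrast, is uniform in the genus and sidesteps both the indecomposability citation and the Brauer--Severi machinery: the hereditary property of $\mathrm{Coh}(C)$ forces any nonzero thick subcategory to contain a sheaf, and the rank and Euler-characteristic homomorphisms detect that sheaf's class in $K_0$. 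What you gain is a shorter and more elementary proof; what the paper's genus-zero argument buys is the stronger Jordan--H\"older statement, which does not follow from your $K_0$ computation. The ``more hands-on alternative'' you sketch in your final paragraph is precisely the paper's proof.
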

For reasons related to several conjectures on central simple algebras, we are interested in derived characterizations of Brauer--Severi varieties and its invariants. Denote by $k^s$ a separable closure of the field $k$. Recall that a $k$-scheme $X$ is called a \emph{Brauer--Severi variety} if $X\otimes_k k^s \simeq \mathbb{P}^n_{k^s}$ for some $n$. We say $X$ is split if $X\simeq \mathbb{P}^n_k$. 
Via Galois cohomology, Brauer--Severi varieties are in one-to-one correspondence with central simple $k$-algebras. A finite dimensional associative $k$-algebra $A$ is called \emph{central simple} if the only two-sided ideals are $0$ and $A$ and whose center equals $k$. For any central simple $k$-algebra $A$ there is an integer $n>0$ and a division algebra $D$, such that $A\simeq M_n(D)$. The division algebra $D$ is also central and unique up to isomorphism. Note that a finite dimensional associative $k$-algebra $A$ is central simple if and only if $A\otimes_k k^s\simeq M_n(k^s)$. 
Two central simple $k$-algebras $A\simeq M_n(D)$ and $B\simeq M_m(D')$ are called \emph{Brauer-equivalent} if $D\simeq D'$. Recall that the \emph{Brauer group} $\mathrm{Br}(k)$ of a field $k$ is the group whose elements are equivalence classes of central simple $k$-algebras, with addition given by the tensor product of algebras. 
It is a fact that the Brauer group of any field is a torsion group. The order of an equivalence class $[A]\in \mathrm{Br}(k)$ is called the \emph{period} of $[A]$ and is denoted by $\mathrm{per}(A)$. The \emph{degree} of a central simple algebra $A=M_n(D)$ is defined to be $\mathrm{deg}(A):=\sqrt{\mathrm{dim}_k A}$ whereas the degree of the unique central division algebra $D$ is called the \emph{index} of $A$. Furthermore, the index of $A$ is divided by the period of $A$ and both have the same prime factors (see \cite{GS}, Proposition 4.5.13). 
The \emph{index} of a Brauer--Severi variety $X$ is defined to be the index of the corresponding central simple algebra and will be denoted by $\mathrm{ind}(X)$. It is also worth to mention that if the Brauer--Severi variety $X$ corresponds to $A$, one has $\mathrm{dim}(X)=\mathrm{deg}(A)-1$. For details on Brauer--Severi varieties, central simple algebras and their invariants we refer to \cite{AR} and \cite{GS}.


For a smooth projective variety $X$ over a field $k$, we denote by $D^b(X)$ the bounded derived category of coherent sheaves. 
We very briefly recall the definitions of weak exceptional collections and semiorthogonal decompositions and refer to \cite{BER1} and references therein. 

\textnormal{An object $\mathcal{E}^{\bullet}\in D^b(X)$ is called \emph{weak exceptional} (or w-exceptional for short) if $\mathrm{End}(\mathcal{E}^{\bullet})=A$, for some (not necessarily central) division $k$-algebra $A$ and $\mathrm{Hom}(\mathcal{E}^{\bullet},\mathcal{E}^{\bullet}[r])=0$ for $r\neq 0$. If $A=k$ the object is called \emph{exceptional}.}
\textnormal{A totally ordered set $\{\mathcal{E}^{\bullet}_1,...,\mathcal{E}^{\bullet}_n\}$ of w-exceptional objects on $X$ is called an \emph{w-exceptional collection} if $\mathrm{Hom}(\mathcal{E}^{\bullet}_i,\mathcal{E}^{\bullet}_j[r])=0$ for all integers $r$ whenever $i>j$. An w-exceptional collection is \emph{full} if $\langle\{\mathcal{E}^{\bullet}_1,...,\mathcal{E}^{\bullet}_n\}\rangle=D^b(X)$ and \emph{strong} if $\mathrm{Hom}(\mathcal{E}^{\bullet}_i,\mathcal{E}^{\bullet}_j[r])=0$ whenever $r\neq 0$. If the set $\{\mathcal{E}^{\bullet}_1,...,\mathcal{E}^{\bullet}_n\}$ consists of exceptional objects it is called \emph{exceptional collection}}. 

A generalization of the notion of a full w-exceptional collection is that of a semiorthogonal decomposition of $D^b(X)$. Recall that a full triangulated subcategory $\mathcal{A}$ of $D^b(X)$ is called \emph{admissible} if the inclusion $\mathcal{D}\hookrightarrow D^b(X)$ has a left and right adjoint functor.
\textnormal{Let $X$ be a smooth projective variety over $k$. A sequence $\mathcal{A}_1,...,\mathcal{A}_n$ of full triangulated subcategories of $D^b(X)$ is called \emph{semiorthogonal} if all $\mathcal{A}_i\subset D^b(X)$ are admissible and $\mathcal{A}_j\subset \mathcal{A}_i^{\perp}=\{\mathcal{F}^{\bullet}\in D^b(X)\mid \mathrm{Hom}(\mathcal{G}^{\bullet},\mathcal{F}^{\bullet})=0$, $\forall$ $ \mathcal{G}^{\bullet}\in\mathcal{A}_i\}$ for $i>j$. Such a sequence defines a \emph{semiorthogonal decomposition} of $D^b(X)$ if the smallest full triangulated subcategory containing all $\mathcal{A}_i$ equals $D^b(X)$.}
For a semiorthogonal decomposition we write $D^b(X)=\langle \mathcal{A}_1,...,\mathcal{A}_n\rangle$. If $\mathcal{A}$ is an admissible subcategory of $D^b(X)$, one has $D^b(X)=\langle \mathcal{A}^{\perp},\mathcal{A}\rangle$. 
If $D^b(X)$ admits a semiorthogonal decomposition $D^b(X)=\langle\mathcal{A}_1,...,\mathcal{A}_n\rangle$ one has
\begin{eqnarray*}
K_0(X)\simeq K_0(\mathcal{A}_1)\oplus...\oplus K_0(\mathcal{A}_n).
\end{eqnarray*}
\textnormal{It is easy to verify that if $\mathcal{E}^{\bullet}_1,...,\mathcal{E}^{\bullet}_n$ is a full w-exceptional collection on $X$, then by setting $\mathcal{A}_i=\langle\mathcal{E}^{\bullet}_i\rangle$ one gets a semiorthogonal decomposition $D^b(X)=\langle \mathcal{A}_1,...,\mathcal{A}_n\rangle$.}
In \cite{BER} Bernardara constructed a semiorthogonal decomposition for Brauer--Severi varieties. Let $X$ be the Brauer--Severi variety corresponding to a central simple $k$-algebra $A$. Then 
\begin{eqnarray}
D^b(X)=\langle D^b(k), D^b(A),..., D^b(A^{\otimes \mathrm{dim}(X)})\rangle
\end{eqnarray}
is a semiorthogonal decomposition of $D^b(X)$. For a wonderful and comprehensive overview of the theory on semiorthogonal decompositions and its relevance in algebraic geometry we refer to \cite{KU}.

By a theorem of Bondal and Orlov, a Brauer--Severi variety can be recovered from its derived category. So it is natural to study how birationality of two Brauer--Severi varieties is detected in their respective derived categories (see \cite{NOO},\cite{NO2}). In \cite{NO2} the author gives a derived characterization for a Brauer--Severi variety to be split, i.e. to be birational to $\mathbb{P}^n$. In this context, Theorem 1.2 below gives a characterizations of Brauer--Severi varieties in terms of their derived category. Let us briefly recall base change of semiorthogonal decompositions. If $T$ is a $k$-linear triangulated category with dg-enhancement and $K/k$ a field extension, we denote by $T_K$ the extension of scalars category defined in \cite{SO}. As expected, if $X$ is a smooth projective $k$-variety, $D^b(X)_K\simeq D^b(X\otimes_k K)$. Moreover, if $T$ is an admissible subcategory of $D^b(X)$, then $T_K$ is admissible in $D^b(X\otimes_k K)$ and one can show that $\langle A,B\rangle=D^b(X)$ if and only if $\langle A_K,B_K\rangle=D^b(X\otimes_k K)$. For a more general treatment of base change of semiorthogonal decompositions, we refer to \cite{KUZ}.
\begin{thm}
Let $X$ be a smooth projective variety of dimension $n$ over a field $k$ of characteristic zero. Then $X$ is a Brauer--Severi variety if and only if there is a semiorthogonal decomposition $D^b(X)=\langle \mathcal{A}_0,\mathcal{A}_1,...,\mathcal{A}_n\rangle$ such that its base change to $k^s$ is a semiorthogonal decomposition of the form $D^b(X_{k^s})=\langle (\mathcal{A}_0)_{k^s},(\mathcal{A}_1)_{k^s},...,(\mathcal{A}_n)_{k^s}\rangle$ with $(\mathcal{A}_i)_{k^s}\simeq \langle \mathcal{L}_i\rangle$ for some line bundles $\mathcal{L}_i$.
\end{thm}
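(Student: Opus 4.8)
The plan is to establish both directions separately, with the forward direction being essentially a reference to Bernardara's decomposition (1) together with the observation that over a separable closure a central simple algebra splits, so that $D^b(A^{\otimes i})_{k^s}\simeq D^b(k^s)\simeq\langle\mathcal{O}_{\mathbb{P}^n_{k^s}}(i)\rangle$; thus each $(\mathcal{A}_i)_{k^s}$ is generated by a line bundle. The substance is the converse: given such a decomposition $D^b(X)=\langle\mathcal{A}_0,\dots,\mathcal{A}_n\rangle$ whose base change to $k^s$ is generated by line bundles $\mathcal{L}_0,\dots,\mathcal{L}_n$, I must show $X$ is a Brauer--Severi variety, i.e. $X_{k^s}\simeq\mathbb{P}^n_{k^s}$.

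First I would work over $k^s$. A full exceptional collection of line bundles $\mathcal{L}_0,\dots,\mathcal{L}_n$ of length $n+1$ on the smooth projective $n$-dimensional variety $X_{k^s}$ forces strong constraints: the existence of a full exceptional collection implies $K_0(X_{k^s})\simeq\mathbb{Z}^{n+1}$ is free of rank $n+1$, the Hochschild homology is concentrated as that of $n+1$ points, hence $h^{p,q}(X_{k^s})=0$ off the diagonal and $\sum h^{p,p}=n+1$, so in particular $H^i(X_{k^s},\mathcal{O})=0$ for $i>0$ and $b_{2i}=1$, $b_{2i+1}=0$. Here I would invoke the classification result (in characteristic zero this is available, e.g. via the work on varieties admitting full exceptional collections of line bundles, or one can argue directly): a smooth projective variety over an algebraically closed field of characteristic zero possessing a full exceptional collection of $n+1$ line bundles is isomorphic to $\mathbb{P}^n$. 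One clean route is to use that $-K_{X_{k^s}}$ must be ample of the appropriate form (the alternating sum/Serre functor argument pins down the canonical class), combined with the Betti/Hodge numbers above, to conclude via Kobayashi--Ochiai or the characterization of projective space by its derived category that $X_{k^s}\simeq\mathbb{P}^n_{k^s}$.

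Once $X_{k^s}\simeq\mathbb{P}^n_{k^s}$, the variety $X$ is a twisted form of $\mathbb{P}^n$, i.e. an element of $H^1(\mathrm{Gal}(k^s/k),\mathrm{PGL}_{n+1}(k^s))=H^1(k,\mathrm{PGL}_{n+1})$; by the standard cohomological classification (see \cite{GS}, \cite{AR}) such forms are exactly the Brauer--Severi varieties of dimension $n$. This completes the converse. The main obstacle I anticipate is the geometric step over $k^s$: deducing $X_{k^s}\cong\mathbb{P}^n$ from the bare existence of a full exceptional collection of $n+1$ line bundles. This is where characteristic zero is genuinely used (resolution/Hodge theory, and the absence of pathological examples in positive characteristic), and where I would need to be careful to either cite the appropriate classification theorem or supply the Hodge-theoretic plus Mori-theoretic argument that rules out all alternatives; everything else (Bernardara's decomposition, splitting over $k^s$, base-change compatibility of semiorthogonal decompositions, and the Galois-descent identification with Brauer--Severi varieties) is either quoted from the excerpt or standard.
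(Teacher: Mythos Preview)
Your proposal is correct and follows essentially the same route as the paper: for the forward direction you both invoke Bernardara's decomposition (1) and its behavior under base change to $k^s$; for the converse you both reduce to showing that a smooth projective $n$-fold over $k^s$ carrying a full exceptional collection of $n+1$ line bundles must be $\mathbb{P}^n_{k^s}$, whence $X$ is a Brauer--Severi variety by Galois descent. The only difference is that where you sketch a Hodge-theoretic/Kobayashi--Ochiai argument for this classification step, the paper simply cites Vial \cite{V}, Theorem 1.2, which provides exactly this statement in characteristic zero.
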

 Recall from \cite{AE}, a vector bundle $\mathcal{E}$ on a proper $k$-variety $X$ is called \emph{pure of type} $\mathcal{L}$ if it splits after base change as $\mathcal{E}\otimes_k k^s\simeq \mathcal{L}^{\oplus m}$ for some line bundle $\mathcal{L}$ on $X\otimes_k k^s$. Throughout the work we call such bundles \emph{pure}.
\begin{cor}
Let $X$ be a smooth projective variety of dimension $n$ over a field $k$ of characteristic zero. Then $X$ is a Brauer--Severi variety if and only if there is a full w-exceptional collection $\mathcal{V}_0,...,\mathcal{V}_n$ consisting of pure vector bundles. The variety $X$ is a non-split Brauer--Severi variety if and only if there is full w-exceptional collection $\mathcal{V}_0,...,\mathcal{V}_n$ of pure vector bundles and $l\in\{0,...,n\}$ such that $\mathrm{ind}(\mathrm{End}(\mathcal{V}_l))>1$.
\end{cor}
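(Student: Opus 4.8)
The plan is to derive the Corollary from Theorem 1.2 and Bernardara's semiorthogonal decomposition recalled above, using the elementary observations that a full w-exceptional collection $\mathcal{V}_0,\dots,\mathcal{V}_n$ on $X$ is nothing but a semiorthogonal decomposition $D^b(X)=\langle\mathcal{A}_0,\dots,\mathcal{A}_n\rangle$ with $\mathcal{A}_i=\langle\mathcal{V}_i\rangle$, and that for a pure vector bundle $\mathcal{V}$ with $\mathcal{V}\otimes_k k^s\simeq\mathcal{L}^{\oplus m}$ one has $(\langle\mathcal{V}\rangle)_{k^s}=\langle\mathcal{L}^{\oplus m}\rangle=\langle\mathcal{L}\rangle$.

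For the implication ``$X$ Brauer--Severi $\Rightarrow$ full w-exceptional collection of pure vector bundles'', let $X$ correspond to the central simple algebra $A$ and start from Bernardara's decomposition $D^b(X)=\langle D^b(k),D^b(A),\dots,D^b(A^{\otimes n})\rangle$. I would compose each equivalence $\mathcal{A}_i\simeq D^b(A^{\otimes i})$ with the Morita equivalence $D^b(A^{\otimes i})\simeq D^b(D_i)$, where $D_i$ is the division algebra in the Brauer class of $A^{\otimes i}$, and transport the rank-one free $D_i$-module back to $D^b(X)$; this yields a generator $\mathcal{V}_i$ of $\mathcal{A}_i$ with $\mathrm{End}(\mathcal{V}_i)\simeq D_i$ a division algebra, which from the explicit form of Bernardara's construction is a vector bundle that becomes $\mathcal{O}(i)^{\oplus m_i}$ over $k^s$, hence is pure. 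Then $\mathcal{V}_0,\dots,\mathcal{V}_n$ is a full w-exceptional collection of pure vector bundles.

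Conversely, given such a collection $\mathcal{V}_0,\dots,\mathcal{V}_n$, put $\mathcal{A}_i=\langle\mathcal{V}_i\rangle$ to obtain $D^b(X)=\langle\mathcal{A}_0,\dots,\mathcal{A}_n\rangle$; writing $\mathcal{V}_i\otimes_k k^s\simeq\mathcal{L}_i^{\oplus m_i}$ for line bundles $\mathcal{L}_i$ on $X_{k^s}$, base change of semiorthogonal decompositions gives $D^b(X_{k^s})=\langle(\mathcal{A}_0)_{k^s},\dots,(\mathcal{A}_n)_{k^s}\rangle$ with $(\mathcal{A}_i)_{k^s}=\langle\mathcal{L}_i^{\oplus m_i}\rangle=\langle\mathcal{L}_i\rangle$, so $X$ is a Brauer--Severi variety by Theorem 1.2. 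For the refinement I would use that on the split variety $\mathbb{P}^n_k$ every pure vector bundle $\mathcal{V}$ is of the form $\mathcal{O}(d)^{\oplus m}$: since $\mathrm{Pic}(\mathbb{P}^n_{k^s})=\mathbb{Z}$ one has $\mathcal{V}\otimes_k k^s\simeq\mathcal{O}(d)^{\oplus m}$, so $\mathcal{V}(-d)$ is a form of $\mathcal{O}^{\oplus m}$ and hence trivial by Hilbert's Theorem 90 ($H^1(\mathrm{Gal}(k^s/k),\mathrm{GL}_m(k^s))=1$), whence $\mathrm{End}(\mathcal{V})\simeq M_m(k)$, a division algebra only for $m=1$ and then equal to $k$. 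Thus a full w-exceptional collection of pure vector bundles with $\mathrm{ind}(\mathrm{End}(\mathcal{V}_l))>1$ for some $l$ forces $X\not\simeq\mathbb{P}^n_k$, so by the first part $X$ is a non-split Brauer--Severi variety; and if $X$ is non-split, then $A$ is nontrivial in $\mathrm{Br}(k)$ and the bundle $\mathcal{V}_1$ of the collection built above has $\mathrm{ind}(\mathrm{End}(\mathcal{V}_1))=\mathrm{ind}(D_1)=\mathrm{ind}(A)>1$.

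The step I expect to be the main obstacle is the forward direction of the first assertion: exhibiting Bernardara's components as generated by honest pure vector bundles (concentrated in cohomological degree zero) with division endomorphism algebra, which requires unwinding the explicit equivalences rather than merely invoking their existence. The remaining ingredients --- the dictionary between full w-exceptional collections and semiorthogonal decompositions, base change of semiorthogonal decompositions, and the Hilbert 90 computation on $\mathbb{P}^n_k$ --- are routine.
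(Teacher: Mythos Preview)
Your proposal is correct and follows essentially the same outline as the paper: both directions of the first assertion are handled identically (base change of the collection plus Theorem~1.2 for one direction; the explicit generators of Bernardara's components---which the paper simply cites as \cite{ORL}, Example~1.17---for the other), and the ``non-split $\Rightarrow$ some $\mathrm{ind}>1$'' implication is the same in both.

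The one genuine difference is in the argument that $\mathrm{ind}(\mathrm{End}(\mathcal{V}_l))>1$ forces $X$ to be non-split. You argue directly via Hilbert~90: on $\mathbb{P}^n_k$ any pure bundle is a twist of a form of $\mathcal{O}^{\oplus m}$, hence actually $\mathcal{O}(d)^{\oplus m}$, so its endomorphism algebra is $M_m(k)$ and has index $1$. The paper instead argues that if $\mathrm{End}(\mathcal{V}_l)$ is non-split then the line bundle $\mathcal{L}_l$ on $X_{k^s}$ cannot descend to $X$, and then invokes the characterization that $X$ is split if and only if $\mathrm{Pic}(X)\to\mathrm{Pic}(X_{k^s})$ is surjective. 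Your route is slightly more self-contained; the paper's route makes the geometric obstruction (non-descent of a line bundle) more visible. They are close cousins---indeed the paper's implicit step ``$\mathrm{End}(\mathcal{V}_l)$ non-split $\Rightarrow$ $\mathcal{L}_l$ does not descend'' is essentially the contrapositive of your Hilbert~90 computation.
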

Let $X$ be a smooth projective variety over $k$. We say $D^b(X)$ is \emph{representable in dimension $m$} if there is a semiorthogonal decomposition $D^b(X)=\langle \mathcal{A}_1,...,\mathcal{A}_n\rangle$ and for each $i=1,...,n$ there exists smooth projective connected varieties $Y_i$ with $\mathrm{dim}(Y_i)\leq m$, such that $\mathcal{A}_i$ is equivalent to an admissible subcategory of $D^b(Y_i)$ (see \cite{AB1} for details). We use the following notation
\begin{eqnarray*}
\mathrm{rdim}_{\mathrm{cat}}(X):=\mathrm{min}\{m\mid \textnormal{X is representable in dimension m}\},
\end{eqnarray*}
whenever such a finite $m$ exists. For Brauer--Severi varieties $X$ we observe $\mathrm{rdim}_{\mathrm{cat}}(X)\leq\mathrm{ind}(X)-1$ (see Proposition 4.1). We can prove even more, namely, we recover the index by the following result.
\begin{thm}
Let $X$ be a Brauer--Severi variety of index $\leq 3$. Then $\mathrm{rdim}_{\mathrm{cat}}(X)=\mathrm{ind}(X)-1$.
\end{thm}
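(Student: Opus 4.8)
Since $\mathrm{rdim}_{\mathrm{cat}}(X)\leq \mathrm{ind}(X)-1$ is Proposition 4.1, the plan is to prove the reverse inequality, i.e.\ that $X$ is not representable in dimension $\mathrm{ind}(X)-2$. If $\mathrm{ind}(X)=1$ then $X\simeq\mathbb{P}^n_k$ and there is nothing to do, so set $m:=\mathrm{ind}(X)\in\{2,3\}$, let $A$ be the corresponding central simple algebra, $n=\dim(X)$, so $\deg(A)=n+1$ and $t:=(n+1)/m\geq 1$. Since $m$ is prime and period and index have the same prime factors, $\mathrm{per}(A)=m$, whence $\mathrm{ind}(A^{\otimes j})=m$ for $m\nmid j$ and $=1$ for $m\mid j$. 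Arguing by contradiction, I would assume there is a semiorthogonal decomposition $D^b(X)=\langle\mathcal{A}_1,\dots,\mathcal{A}_r\rangle$ with each $\mathcal{A}_i$ equivalent to an admissible subcategory of $D^b(Y_i)$, $Y_i$ smooth projective connected over $k$ with $\dim(Y_i)\leq m-2$.

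The first step is to pin down the $Y_i$ by a $K_0$-rank count. Choose a finite Galois extension $L/k$ splitting $A$ and containing every field of constants $l_i:=H^0(Y_i,\mathcal{O}_{Y_i})$ (each $l_i/k$ finite separable since $Y_i$ is smooth), and base change the decomposition to $L$. Bernardara's decomposition gives $\mathrm{rk}_{\mathbb{Z}}K_0(X)=n+1$ (each $K_0(D^b(A^{\otimes j}))\cong\mathbb{Z}$), and $\mathrm{rk}_{\mathbb{Z}}K_0(X_L)=\mathrm{rk}_{\mathbb{Z}}K_0(\mathbb{P}^n_L)=n+1$, so by additivity of $K_0$ over semiorthogonal decompositions $\sum_i\mathrm{rk}_{\mathbb{Z}}K_0(\mathcal{A}_i)=n+1=\sum_i\mathrm{rk}_{\mathbb{Z}}K_0((\mathcal{A}_i)_L)$. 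Since extension of scalars $K_0(\mathcal{A}_i)_{\mathbb{Q}}\to K_0((\mathcal{A}_i)_L)_{\mathbb{Q}}$ is injective (restriction of scalars composes with it to multiplication by $[L:k]$) and $(\mathcal{A}_i)_L\simeq(\mathcal{A}_i\otimes_{l_i}L)^{\oplus[l_i:k]}$ (as $\mathcal{A}_i$ is $l_i$-linear and $l_i\otimes_k L\cong L^{[l_i:k]}$), each term obeys $\mathrm{rk}_{\mathbb{Z}}K_0((\mathcal{A}_i)_L)\geq[l_i:k]\cdot\mathrm{rk}_{\mathbb{Z}}K_0(\mathcal{A}_i)$, and equality of the two sums forces $l_i=k$ whenever $\mathcal{A}_i\neq0$. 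For $m=2$ each such $Y_i$ is then $\mathrm{Spec}(k)$, so $D^b(X)$ carries a full exceptional collection. For $m=3$ each such $Y_i$ is $\mathrm{Spec}(k)$ or a geometrically connected smooth projective curve over $k$; a positive-genus curve cannot occur, for then $D^b(Y_i)$ is indecomposable (by the structural results on derived categories of curves in this paper, the only smooth projective curves with a nontrivial semiorthogonal decomposition are the Brauer--Severi curves), forcing $\mathcal{A}_i\in\{0,D^b(Y_i)\}$, while $\mathrm{HH}_1(D^b(Y_i))=H^0(Y_i,\Omega^1_{Y_i})\neq0$ but $\mathrm{HH}_1(D^b(X))=0$ (Hochschild homology is base-change stable and additive over semiorthogonal decompositions, and $\mathrm{HH}_1(\mathbb{P}^n_L)=0$ via its exceptional collection). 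So for $m=3$ every curve among the $Y_i$ is a Brauer--Severi curve over $k$.

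The second step is the determinant of the Euler form. For a smooth projective $k$-variety, or an admissible subcategory of its derived category, with $K_0$ finitely generated free, let $\delta:=\det\chi$ be the determinant of the Gram matrix of the Euler form $\chi$ in any $\mathbb{Z}$-basis; it is well defined (a change of basis conjugates the matrix by one of determinant $\pm1$), multiplicative along semiorthogonal decompositions (the Gram matrix being block triangular), and $\delta(\langle E\rangle)=\dim_k\mathrm{End}(E)$ for a w-exceptional $E$. From Bernardara's $D^b(X)=\langle D^b(A^{\otimes0}),\dots,D^b(A^{\otimes n})\rangle$ one reads $\delta(D^b(X))=\prod_{j=0}^{n}\mathrm{ind}(A^{\otimes j})^2=m^{2t(m-1)}$, which is $4^t>1$ for $m=2$ and $3^{4t}$, an odd integer $>1$, for $m=3$. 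On the other hand, the hypothetical decomposition gives $\delta(D^b(X))=\prod_i\delta(\mathcal{A}_i)$, and for each surviving $i$, $\delta(\mathcal{A}_i)$ divides $\delta(D^b(Y_i))$ (from $D^b(Y_i)=\langle\mathcal{A}_i^{\perp},\mathcal{A}_i\rangle$); as $\delta(D^b(\mathrm{Spec}\,k))=1$ and, by $D^b(C)=\langle D^b(k),D^b(Q_C)\rangle$ for a Brauer--Severi curve $C$ over $k$ with quaternion algebra $Q_C$, $\delta(D^b(C))=\mathrm{ind}(Q_C)^2\in\{1,4\}$, the product $\delta(D^b(X))$ equals $1$ when $m=2$ and, up to sign, a power of $2$ when $m=3$ --- in neither case the value coming from Bernardara's decomposition. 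This contradiction shows $X$ is not representable in dimension $m-2$, so $\mathrm{rdim}_{\mathrm{cat}}(X)\geq m-1$, and with Proposition 4.1, $\mathrm{rdim}_{\mathrm{cat}}(X)=m-1=\mathrm{ind}(X)-1$.

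The real obstacle is the $m=3$ part of the first step, which rests on two structural inputs about curves: that $D^b$ of a positive-genus smooth projective curve over an arbitrary field is indecomposable, and Bernardara's semiorthogonal decomposition of a Brauer--Severi curve over $k$. Granting these --- which this paper supplies --- the $K_0$-rank bookkeeping and the multiplicativity of $\det\chi$ are formal, and the arithmetic of the indices $\mathrm{ind}(A^{\otimes j})$ closes the argument.
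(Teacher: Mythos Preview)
Your argument is correct and your first step (ruling out non-trivial fields of constants by a $K_0$-rank count after base change, then excluding positive-genus curves via $\mathrm{HH}_1$) closely parallels what the paper obtains by citing \cite{AB1}, Propositions~6.1.6 and~6.1.10, together with the same $\mathrm{HH}_1$/$K_0$-torsion observation. The genuine divergence is in the second step. The paper invokes noncommutative motives: from the two decompositions one gets
\[
\bigoplus_i U(k)\ \oplus\ \bigoplus_j U(D_j)\ \simeq\ U(k)\oplus U(A)\oplus\cdots\oplus U(A^{\otimes\dim X}),
\]
and then applies the Tabuada--Van den Bergh comparison (Theorem~4.2) to force $[A]=[D_j]$ in $\mathrm{Br}(k)$ for some $j$, contradicting $\mathrm{ind}(D_j)\leq 2$ when $\mathrm{ind}(A)=3$. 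Your approach replaces this black box by the determinant $\delta=\det\chi$ of the Euler form on $K_0$: it is multiplicative along semiorthogonal decompositions, equals $\mathrm{ind}(B)^2$ on $D^b(B)$ for $B$ central simple, and hence is $4^t$ (respectively $3^{4t}$) on $D^b(X)$ from Bernardara's side, while the hypothetical side yields $1$ (respectively a $\pm$-power of $2$). This is more elementary and entirely self-contained once one has the Jordan--H\"older/no-phantom input for curves (Corollary~2.5 and Theorem~1.1 here), which in particular guarantees that the complementary pieces in $D^b(Y_i)$ have free $K_0$, so the divisibility $\delta(\mathcal{A}_i)\mid\delta(D^b(Y_i))$ is legitimate. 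The paper's route, by contrast, packages the comparison at the level of Brauer classes and would scale more readily to higher indices, whereas your Euler-determinant invariant is tailored to the prime-index cases $2$ and $3$ treated here.
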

In \cite{NO2} it is proved that a Brauer--Severi variety $X$ is split if and only if $\mathrm{rdim}_{\mathrm{cat}}(X)=0$. 
We formulate the following conjecture which gives a derived interpretation of the index, at least in the case period equals index.
\begin{con2}
Let $X$ be a Brauer--Severi variety with same period and index. Then $\mathrm{rdim}_{\mathrm{cat}}(X)=\mathrm{ind}(X)-1$.
\end{con2}
We want to mention that it is indeed a challenging problem to determine $\mathrm{rdim}_{\mathrm{cat}}(X)$ for a given Brauer--Severi variety or to find some kind of formula for $\mathrm{rdim}_{\mathrm{cat}}(X)$ depending on the invariants index and period.\\ 

{\small \textbf{Conventions}. Throughout this work $k$ denotes an arbitrary ground field, $k^s$ a separable and $\bar{k}$ an algebraic closure. Moreover, $D^b(X)$ denotes the bounded derived category of coherent sheaves on a smooth projective $k$-variety $X$.\\

{\small \textbf{Acknowledgement}. I wish to thank Marcello Bernardara and Pieter Belmans for very useful comments and the Heinrich--Heine--University for financial support via the SFF-grant. I also like to thank the referee for careful reading and suggestions which helped improve the paper. 

\section{Proof of Theorem 1.1}
Below we give the definitions of quasi-phantom and phantom subcategories as stated in \cite{GO}. For this, we need Hochschild homology of $D^b(X)$. We do not want to give the definition here and refer to \cite{KU0} for details, but we want to mention that they can be defined for admissible subcategories $\mathcal{A}\subset D^b(X)$. So let $\mathcal{D}$ be a triangulated category (for which Hochschild homology can be defined) and denote by $\mathrm{HH}_{\bullet}(\mathcal{D})$ its Hochschild homology. When $\mathcal{D}=D^b(X)$ is geometric, the Hochschild homology is defined by
\begin{eqnarray*}
\mathrm{HH}_{\bullet}(X):=\mathrm{HH}_{\bullet}(D^b(X)):=H^{\bullet}(X\times X, \Delta_*\mathcal{O}_X\otimes \Delta_*\mathcal{O}_X),
\end{eqnarray*}
where the push-forward and tensor product are meant to be derived.
If $D^b(X))=\langle \mathcal{A}_1,...,\mathcal{A}_m\rangle$ admits a semiorthogonal decomposition, one has 
\begin{eqnarray*}
\mathrm{HH}_{\bullet}(\mathcal{D})\simeq \bigoplus^m_{i=1}\mathrm{HH}_{\bullet}(\mathcal{A}_i).
\end{eqnarray*}

\begin{defi}
\textnormal{An admissible triangulated subcategory $\mathcal{A}$ of $D^b(X)$, where $X$ is a smooth projective variety will be called \emph{quasi-phantom} if $\mathrm{HH}_{\bullet}(\mathcal{A})=0$ and $K_0(\mathcal{A})$ is a finite abelian group. If in addition $K_0(\mathcal{A})=0$, it is called \emph{phantom}.}
\end{defi}
If $\mathcal{A}$ is admissible in $D^b(X)$ and $\mathcal{B}$ in $D^b(Y)$, we write $\mathcal{A}\boxtimes \mathcal{B}\subset D^b(X\times Y)$ for the smallest triangulated subcategory closed under taking direct summands and containing all objects of the form $p^*\mathcal{E}^{\bullet}\otimes q^*\mathcal{F}^{\bullet}$ (where $p$ and $q$ are the respective projections) for $\mathcal{E}^{\bullet}\in D^b(X)$ and $\mathcal{F}^{\bullet}\in D^b(Y)$.
\begin{defi}
\textnormal{We say that an admissible triangulated subcategory $\mathcal{A}$ of $D^b(X)$ is a \emph{universal phantom} if $\mathcal{A}\boxtimes D^b(Y)\subset D^b(X\times Y)$ is a phantom for any smooth projective variety $Y$.}
\end{defi}
To prove Theorem 1.1, we first show the following:
\begin{prop}
Let $X$ be the Brauer--Severi variety corresponding to a central simple algebra $A$. Assume $D^b(X)=\langle \mathcal{A}_0,\mathcal{A}_1,...,\mathcal{A}_n\rangle$ is a semiorthogonal decomposition, such that its base change to $k^s$ is a semiorthogonal decomposition of the form $D^b(X_{k^s})=\langle \mathcal{O}(p),\mathcal{O}(p+1),...,\mathcal{O}(p+n)\rangle$ for some $p\in \mathbb{Z}$. Then $\mathcal{A}_i\simeq D^b(A^{\otimes(p+i)})$. 
\end{prop}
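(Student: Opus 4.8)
The plan is to reduce the statement to the structure of admissible subcategories generated by a single object and then use descent. First I would observe that each $\mathcal{A}_i$ is generated by a single object. Indeed, after base change to $k^s$ the component $(\mathcal{A}_i)_{k^s}$ equals $\langle \mathcal{O}(p+i)\rangle$, hence is generated by one exceptional object; since the generation of a triangulated subcategory by a set of objects is detected after the faithfully flat base change $k^s/k$ (this is the ``$\langle A,B\rangle = D^b(X)$ iff $\langle A_{k^s},B_{k^s}\rangle = D^b(X_{k^s})$'' principle recalled in the introduction, applied inside each $\mathcal{A}_i$), there is an object $\mathcal{E}_i \in \mathcal{A}_i$ with $(\mathcal{E}_i)_{k^s} \cong \mathcal{O}(p+i)$ up to the action of the Galois group, and $\mathcal{A}_i = \langle \mathcal{E}_i \rangle$. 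The object $\mathcal{E}_i$ is w-exceptional because $\mathrm{Hom}^{\bullet}(\mathcal{E}_i, \mathcal{E}_i)$ base-changes to $\mathrm{Hom}^{\bullet}(\mathcal{O}(p+i), \mathcal{O}(p+i)) = k^s$ concentrated in degree $0$, so $\mathrm{End}(\mathcal{E}_i)$ is a finite-dimensional $k$-algebra that becomes $k^s$ after base change, hence a central simple $k$-algebra with trivial-looking — but possibly nontrivial — Brauer class, and the higher Ext's vanish.

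The second step identifies that Brauer class. The key point is that $\mathcal{E}_i$ corresponds to a twisted sheaf: the obstruction to descending the line bundle $\mathcal{O}(p+i)$ on $X_{k^s}$ to $X$ is the cohomology class $(p+i)\cdot[A] \in \mathrm{Br}(k)$, where $[A]$ is the Brauer class of $X$ — this is precisely how Bernardara's decomposition $(1)$ is built, the $i$-th component being $D^b(A^{\otimes(p+i)})$ (up to a shift of indexing by $p$, which only reflects the ambiguity $\mathcal{O}(p) \mapsto \mathcal{O}(p+1)$ in the chosen split decomposition, corresponding to $A^{\otimes p} \mapsto A^{\otimes(p+1)}$ and harmless since these have the same Brauer class up to a fixed twist). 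Concretely, one realizes $\mathcal{E}_i$ as (a complex of) modules over the Azumaya algebra $\mathcal{A}_i$ of type $A^{\otimes(p+i)}$, so that $\mathrm{End}(\mathcal{E}_i)$ is Brauer-equivalent to $A^{\otimes(p+i)}$; since $\mathcal{E}_i$ generates its component, $\mathcal{A}_i \simeq D^b(\mathrm{End}(\mathcal{E}_i)\text{-}\mathrm{mod}) \simeq D^b(A^{\otimes(p+i)})$, the last equivalence because Brauer-equivalent algebras have equivalent derived categories of modules (Morita equivalence).

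I would therefore carry out the argument in this order: (1) show each $\mathcal{A}_i = \langle \mathcal{E}_i \rangle$ for a w-exceptional $\mathcal{E}_i$ by Galois descent of generation; (2) compute $\mathrm{End}(\mathcal{E}_i)$ by base change and identify it, via the twisted-sheaf / Azumaya-algebra description underlying Bernardara's decomposition, as a central simple algebra Brauer-equivalent to $A^{\otimes(p+i)}$; (3) conclude $\mathcal{A}_i \simeq D^b(\mathrm{End}(\mathcal{E}_i)) \simeq D^b(A^{\otimes(p+i)})$ by Morita theory. The main obstacle I anticipate is step (2): one must control not just the \emph{abstract} isomorphism type of $\mathrm{End}(\mathcal{E}_i)$ as a $k$-algebra but its \emph{Brauer class}, i.e. pin down \emph{which} twist occurs, and this requires genuinely using the geometry of $X$ as a Brauer--Severi variety (the class of the tautological twisted sheaf on $X$ is $[A]$, so the $(p+i)$-fold twist $\mathcal{O}(p+i)$ descends to a module over something of class $(p+i)[A]$) rather than a purely formal descent argument; handling the shift by $p$ and the fact that $\mathcal{E}_i$ is a priori only a complex, not a sheaf, are the technical points to nail down there.
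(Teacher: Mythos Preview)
Your step (1) contains an error that undercuts the rest. You claim there is $\mathcal{E}_i \in \mathcal{A}_i$ with $(\mathcal{E}_i)_{k^s} \cong \mathcal{O}(p+i)$ as a single copy, and then that $\mathrm{End}(\mathcal{E}_i)\otimes_k k^s \cong k^s$. But a $k$-algebra that becomes $k^s$ after base change is one-dimensional, hence equal to $k$, hence has trivial Brauer class --- directly contradicting what you want in step~(2). The correct statement is that any generator $\mathcal{E}_i$ of $\mathcal{A}_i$ pulls back to $\mathcal{O}(p+i)^{\oplus m}$ for some $m\geq 1$ (for a nonsplit conic the indecomposable pure bundle of type $\mathcal{O}(1)$ has rank~$2$, not~$1$), so that $\mathrm{End}(\mathcal{E}_i)\otimes_k k^s \cong M_m(k^s)$ and $\mathrm{End}(\mathcal{E}_i)$ is central simple of degree~$m$. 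With this correction your outline is repairable, and you rightly flag that identifying the Brauer class as $(p+i)[A]$ is the real work; carrying this out via the twisted-sheaf picture amounts to reproving Bernardara's decomposition rather than using it.

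The paper sidesteps all of this. It observes that membership in $\mathcal{A}_i$ is detected by pullback: $\mathcal{E}\in\mathcal{A}_i$ if and only if $\pi^*\mathcal{E}\in\langle\mathcal{O}(p+i)\rangle$ (one direction is the hypothesis on the base change, the other uses faithfulness of $\pi^*$ and the given semiorthogonal decomposition). By its very construction, Bernardara's component $D^b(A^{\otimes(p+i)})\subset D^b(X)$ in~(1) satisfies the same pullback characterization. Therefore the two admissible subcategories of $D^b(X)$ coincide \emph{as subcategories}, and the equivalence $\mathcal{A}_i\simeq D^b(A^{\otimes(p+i)})$ follows immediately --- no generator is chosen and no endomorphism algebra is computed.
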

\begin{proof}
Let $\pi\colon X_{k^s}\rightarrow X$ be the projection. By definition of the base change, $\mathcal{E}\in \mathcal{A}_i$ if and only if $\pi^*\mathcal{E}$ is a direct sum of shifts of $\mathcal{O}(p+i)$ in $D^b(X_{k^s})=D^b(\mathbb{P}^n)$, i.e. if and only if $\pi^*\mathcal{E}\in \langle\mathcal{O}(p+i)\rangle$. The same property have the components of Bernardara's semiorthogonal decomposition (1) from the introduction. Comparing $D^b(X)=\langle \mathcal{A}_0,\mathcal{A}_1,...,\mathcal{A}_n\rangle$ with the semiorthogonal decomposition (1), one concludes $\mathcal{A}_i\simeq D^b(A^{\otimes(p+i)})$. 
\end{proof}
\begin{defi}
\textnormal{The derived category $D^b(X)$ satisfies the \emph{noetherian property} if every increasing sequence $\mathcal{A}_1\subset \mathcal{A}_2\subset...$ of admissible subcategories becomes stationary. We say $D^b(X)$ has the \emph{Jordan--H\"older property} if $D^b(X)$ satisfies the noetherian property and if for any two maximal semiorthogonal decompositions $D^b(X)=\langle \mathcal{A}_1,...,\mathcal{A}_n\rangle$ and $D^b(X)=\langle \mathcal{B}_1,...,\mathcal{B}_m\rangle$ one has $n=m$ and there is a permutation $\sigma$ such that $\mathcal{B}_i\simeq \mathcal{A}_{\sigma(i)}$}.
\end{defi}
One can also define semiorthogonal decompositions for arbitrary $k$-linear triangulated categories $\mathcal{D}$ (see for instance \cite{KUZ1}). In this context one says that $\mathcal{D}$ is \emph{indecomposable} if it has no non-trivial semiorthogonal decomposition.
\begin{cor}
Let $C$ be the Brauer--Severi curve corresponding to a central simple algebra $A$. If $D^b(C)=\langle\mathcal{A}_0,...,\mathcal{A}_n\rangle$ is a semiorthogonal decomposition, then $n=1$ and there is a permutation $\sigma$ of $\{0,1,...,n\}$ with $\mathcal{A}_{\sigma(i)}\simeq D^b(A^{\otimes i})$, i.e., $D^b(C)$ satisfies the Jordan--H\"older property.
\end{cor}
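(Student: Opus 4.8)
The plan is to push the given decomposition to the separable closure, read off its shape from the classical classification of semiorthogonal decompositions of $D^b(\mathbb{P}^1)$, and then descend with Proposition 2.4. So let $D^b(C)=\langle\mathcal{A}_0,\dots,\mathcal{A}_n\rangle$ be a semiorthogonal decomposition into nonzero components. First I would extend scalars to $k^s$: by the compatibility of semiorthogonal decompositions with base change recalled before Theorem 1.2 (each $(\mathcal{A}_i)_{k^s}$ is admissible and nonzero, and they again assemble into a semiorthogonal decomposition), one obtains $D^b(\mathbb{P}^1_{k^s})=D^b(C_{k^s})=\langle(\mathcal{A}_0)_{k^s},\dots,(\mathcal{A}_n)_{k^s}\rangle$.

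Next I would invoke the classification of admissible subcategories of $D^b(\mathbb{P}^1)$: apart from $0$ and the whole category they are exactly the $\langle\mathcal{O}(m)\rangle$, and every nontrivial semiorthogonal decomposition is $\langle\mathcal{O}(p),\mathcal{O}(p+1)\rangle$ for some $p\in\mathbb{Z}$ (this rests on exceptional objects on $\mathbb{P}^1$ being shifts of line bundles, on $K_0(\mathbb{P}^1)\cong\mathbb{Z}^2$ admitting no phantom piece, and, for the consecutiveness, on $m=-1$ being the only integer with $H^\bullet(\mathbb{P}^1,\mathcal{O}(m))=0$). Hence $n=1$ and $(\mathcal{A}_i)_{k^s}\simeq\langle\mathcal{O}(p+i)\rangle$, so Proposition 2.4 applies and gives $\mathcal{A}_i\simeq D^b(A^{\otimes(p+i)})$ for $i=0,1$. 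Since $C$ is a Brauer--Severi curve, $\dim C=1=\deg(A)-1$, so $\deg(A)=2$ and $\mathrm{per}(A)\mid\mathrm{ind}(A)\mid 2$; thus $A^{\otimes 2}$ is Brauer-trivial, and each $A^{\otimes(p+i)}$ is Brauer-equivalent to $A^{\otimes 0}=k$ or to $A^{\otimes 1}=A$ according to the parity of $p+i$. Brauer-equivalent central simple algebras are Morita-equivalent and hence have equivalent derived categories, so $\{\mathcal{A}_0,\mathcal{A}_1\}$ equals $\{D^b(k),D^b(A)\}$ up to equivalence, which yields a permutation $\sigma$ with $\mathcal{A}_{\sigma(i)}\simeq D^b(A^{\otimes i})$; when $C$ is split, $A$ is Brauer-trivial and both components are equivalent to $D^b(k)$.

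Finally, for the Jordan--H\"older property I would record that $D^b(k)$ and $D^b(A)$ are indecomposable: over a finite-dimensional division $k$-algebra $D$ every finitely generated module is free, so every object of $D^b(D)$ is a finite direct sum of shifts of the simple module $D$, whence the only thick subcategories of $D^b(D)$ are $0$ and $D^b(D)$, and the same holds for $D^b(A)$ by Morita equivalence. It follows that every length-two decomposition of $D^b(C)$ is already maximal; that there is no decomposition of length $\geq 3$, since it would base change to one of $D^b(\mathbb{P}^1_{k^s})$; and that every proper nonzero admissible subcategory $\mathcal{A}$, via $D^b(C)=\langle\mathcal{A}^\perp,\mathcal{A}\rangle$ and the analysis above, is equivalent to $D^b(k)$ or $D^b(A)$ and therefore has no proper nonzero admissible subcategory. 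Hence any strictly increasing chain of admissible subcategories has length at most three, giving the noetherian property, and any two maximal decompositions have length two with the same factor set $\{D^b(k),D^b(A)\}$ up to equivalence, giving uniqueness up to permutation. The one genuinely nontrivial input is the classification of semiorthogonal decompositions of $D^b(\mathbb{P}^1_{k^s})$ — in particular that each component of a nontrivial one is generated by a line bundle — which is, however, well known; everything else is bookkeeping with Brauer classes together with Proposition 2.4.
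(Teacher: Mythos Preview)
Your argument is correct and follows essentially the same route as the paper: base change the decomposition to $k^s$, use the classification of semiorthogonal decompositions of $D^b(\mathbb{P}^1_{k^s})$ to force $n=1$ and identify each component with $\langle\mathcal{O}(p+i)\rangle$, then descend via Proposition~2.3 (not ``2.4''---that label is the Jordan--H\"older definition). You spell out a couple of points the paper leaves implicit---the reduction of $A^{\otimes(p+i)}$ modulo $\mathrm{per}(A)\mid 2$ to obtain the permutation, and the explicit check of indecomposability of $D^b(A)$ and noetherianity---but the core strategy is identical.
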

\begin{proof}
Let $\mathcal{A}\subset D^b(C)$ be a non-trivial admissible subcategory. Then $D^b(C)=\langle \mathcal{A}^{\perp},\mathcal{A}\rangle$ is a semiorthogonal decomposition. After base change to $k^s$ we obtain a semiorthogonal decomposition 
\begin{eqnarray*}
D^b(\mathbb{P}^1)=D^b(C\otimes_k k^s)=\langle \bar{\mathcal{A}}^{\perp},\bar{\mathcal{A}}\rangle,
\end{eqnarray*}
where $\bar{\mathcal{A}}=\mathcal{A}\otimes_k k^s$ and $\bar{\mathcal{A}}^{\perp}=\mathcal{A}^{\perp}\otimes_k k^s$ are the triangulated subcategories obtained from base change (see \cite{KUZ}, Proposition 5.1 or alternatively \cite{TO}). Now one of the components of the latter semiorthogonal decomposition should contain an indecomposable sheaf of positive rank, i.e., a line bundle, say $\mathcal{O}(p)$. The other component should be contained in the left respectively right orthogonal of $\mathcal{O}(p)$, i.e., in $\langle \mathcal{O}(p-1)\rangle$ or in $\langle \mathcal{O}(p+1)\rangle$. Since the triangulated subcategories $\langle \mathcal{O}(p)\rangle$ and $\langle \mathcal{O}(p\pm 1)\rangle$ are indecomposable, we conclude that $\bar{\mathcal{A}}\simeq \langle \mathcal{O}(q)\rangle$ and $\bar{\mathcal{A}}^{\perp}\simeq \langle \mathcal{O}(q-1)\rangle$ for a suitable $q\in \mathbb{Z}$. By Proposition 2.3 it follows that $\mathcal{A}= D^b(A^{\otimes q})$ and $\mathcal{A}^{\perp}\simeq D^b(A^{\otimes (q-1)})$. Since we started with any admissible subcategory, it follows that each component in any semiorthogonal decomposition of $D^b(C)$ has this form. 
\end{proof}
\begin{cor}
Let $C$ be a Brauer--Severi curve. Then $D^b(C)$ cannot have quasi-phantoms, phantoms or universal phantoms.
\end{cor}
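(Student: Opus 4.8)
The plan is to reduce the claim to the previous corollary. By Corollary 2.7, every component $\mathcal{A}_i$ appearing in any semiorthogonal decomposition $D^b(C)=\langle \mathcal{A}_0,\dots,\mathcal{A}_n\rangle$ of the Brauer--Severi curve $C$ is of the form $D^b(A^{\otimes q})$ for some $q\in\mathbb{Z}$, where $A$ is the central simple algebra corresponding to $C$. So it suffices to check that no category of the form $D^b(A^{\otimes q})$ — equivalently $D^b(B)$ for a central simple $k$-algebra $B$ — can be a quasi-phantom, phantom, or universal phantom, and a fortiori the same holds for any admissible subcategory, since an admissible subcategory of $D^b(C)$ extends to a semiorthogonal decomposition $D^b(C)=\langle \mathcal{A}^{\perp},\mathcal{A}\rangle$ and hence is itself one of the $D^b(A^{\otimes q})$.

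First I would rule out the phantom and quasi-phantom cases by a Grothendieck group computation. For a central simple $k$-algebra $B$, the category $D^b(B)$ is the derived category of finitely generated (right) $B$-modules, and $K_0(D^b(B))=K_0(B)$. Since $B\simeq M_r(D)$ for a division algebra $D$, Morita equivalence gives $K_0(B)\simeq K_0(D)\simeq \mathbb{Z}$, generated by the class of the unique simple module; in particular $K_0(\mathcal{A}_i)$ is infinite and nonzero. Thus no component can have finite (let alone trivial) Grothendieck group, which immediately excludes both phantoms and quasi-phantoms. Alternatively one can argue directly on $C$: the restriction of the decomposition to $k^s$ gives $D^b(\mathbb{P}^1)=\langle \mathcal{O}(q-1),\mathcal{O}(q)\rangle$ (up to order), and base change $K_0(\mathcal{A}_i)\otim_{\mathbb{Z}}\mathbb{Z}\hookrightarrow K_0((\mathcal{A}_i)_{k^s})\simeq K_0(\langle\mathcal{O}(q+j)\rangle)\simeq\mathbb{Z}$ shows $K_0(\mathcal{A}_i)$ is a nonzero finitely generated free abelian group (it maps to a finite-index subgroup of $\mathbb{Z}$, since summing over the two components recovers $K_0(C)$, which has rank two).

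For the universal phantom case, suppose some component $\mathcal{A}=D^b(B)$ were a universal phantom, so $\mathcal{A}\boxtimes D^b(Y)$ is a phantom in $D^b(C\times Y)$ for every smooth projective $Y$. Taking $Y=\mathrm{Spec}(k)$ already gives that $\mathcal{A}$ itself is a phantom, contradicting the $K_0$ computation above; so this reduces to the previous case with no extra work. (If one wants to avoid relying on $Y=\mathrm{Spec}(k)$ being admissible in the cleanest form, one can instead take $Y$ to be a point and note $\mathcal{A}\boxtimes D^b(\mathrm{pt})\simeq \mathcal{A}$.)

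The only genuine subtlety — and the step I would be most careful about — is the identification $K_0(D^b(A^{\otimes q}))\simeq\mathbb{Z}$ together with the claim that this group really is the $K_0$ of the abstract admissible subcategory $\mathcal{A}_i\subset D^b(C)$ in the sense used to define (quasi-)phantoms; here one uses that $K_0$ is additive over semiorthogonal decompositions, as recorded in the introduction, so $K_0(\mathcal{A}_i)$ is a direct summand of $K_0(C)$, and by Corollary 2.7 the equivalence $\mathcal{A}_i\simeq D^b(A^{\otimes q})$ is an equivalence of triangulated categories, hence induces an isomorphism on $K_0$. Once this bookkeeping is in place the proof is short: every component has infinite cyclic (or at least infinite) $K_0$, so none of the three phantom phenomena can occur. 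I expect no real obstacle beyond making sure the Morita-invariance of $K_0$ and the additivity statement are cited correctly.
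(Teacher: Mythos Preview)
Your argument is correct and follows the same route as the paper: any admissible subcategory of $D^b(C)$ is identified (via the Jordan--H\"older corollary you cite) with $D^b(A^{\otimes q})$, whose Grothendieck group is $K_0(A^{\otimes q})\simeq\mathbb{Z}$ by Morita equivalence, ruling out (quasi-)phantoms, and the universal phantom case reduces to the phantom case by taking $Y=\mathrm{Spec}(k)$. The only cosmetic issues are the reference numbering (the Jordan--H\"older statement is Corollary~2.5 in the paper, not~2.7) and a typo $\otim$ for $\otimes$ in your alternative base-change argument.
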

\begin{proof}
The proof of Corollary 2.5 shows that any admissible subcategory of $D^b(C)$ is of the form $D^b(A^{\otimes i})$ with $A$ being the central simple algebra corresponding to $C$. 
Since the Grothendieck group of the bounded derived category of a central simple $k$-algebra is not finite or trivial, $\mathcal{A}$ cannot be a quasi-phantom or phantom. Since a universal phantom is a phantom itself (just take $Y$ to be a point), the assertion follows.
\end{proof}


\begin{thm2}[Theorem 1.1]
Let $C$ be a smooth projective curve over a field $k$. Then $D^b(C)$ cannot not have quasi-phantoms, phantoms or universal phantoms.
\end{thm2}
\begin{proof}
It follows from \cite{KO}, Corollary 1.3 that the derived category of a smooth projective curve of genus $g>0$ does not have any non-trivial semiorthogonal decomposition. Together with Corollaries 2.5 and 2.6 this implies that the derived category of any smooth projective curve cannot have quasi-phantoms, phantoms or universal phantoms. This completes the proof of Theorem 1.1.
\end{proof}
\section{Proof of Theorem 1.2 and Corollary 1.3}
\begin{proof} (of Theorem 1.2)
Let $D^b(X)=\langle \mathcal{A}_0,\mathcal{A}_1,...,\mathcal{A}_n\rangle$ be a semiorthogonal decomposition such that the base change of $\langle \mathcal{A}_0,\mathcal{A}_1,...,\mathcal{A}_n\rangle$ to $k^s$ is a semiorthogonal decomposition of the form $D^b(X_{k^s})=\langle \mathcal{L}_0,\mathcal{L}_1,...,\mathcal{L}_n\rangle$, for some line bundles $\mathcal{L}_i$. Then $\{\mathcal{L}_0,\mathcal{L}_1,...,\mathcal{L}_n\}$ is a full exceptional collection on $X_{k^s}$. From \cite{V}, Theorem 1.2 we conclude $X\otimes_k k^s\simeq \mathbb{P}^n$ and therefore $X$ must be a Brauer--Severi variety. 

For the other implication, we notice that the semiorthogonal decomposition (1) base changes to $D^b(X_{k^s})=\langle \mathcal{O}(p),\mathcal{O}(p+1),...,\mathcal{O}(p+n)\rangle$ for some $p\in \mathbb{Z}$ by construction. This completes the proof. 
\end{proof}
Recall from the introduction that a vector bundle $\mathcal{E}$ on a proper $k$-variety $X$ is called pure if it splits after base change as $\mathcal{E}\otimes_k k^s\simeq \mathcal{L}^{\oplus m}$ for some line bundle $\mathcal{L}$ on $X\otimes_k k^s$
\begin{proof}(of Corollary 1.3)
Let $\mathcal{V}_0,...,\mathcal{V}_n$ be a full w-exceptional collection consisting of pure vector bundles on a $n$-dimensional variety $X$. Then $D^b(X)=\langle \mathcal{V}_0,...,\mathcal{V}_n\rangle$ is a semiorthogonal decomposition. By the definition of pure vector bundles we have after base change to $k^s$
\begin{eqnarray*}
\mathcal{V}_i\otimes_k k^s\simeq \mathcal{L}^{\oplus d_i}_i
\end{eqnarray*}
for some line bundle $\mathcal{L}_i$. Moreover, the isomorphisms
\begin{eqnarray*}
\mathrm{End}(\mathcal{V}_i)\otimes_k k^s\simeq \mathrm{End}(\mathcal{L}^{\oplus d_i}_i)\simeq M_{d_i}(k^s)
\end{eqnarray*}
imply that $\mathrm{End}(\mathcal{V}_i)$ are central simple $k$-algebras (see Introduction). Therefore, the decomposition $D^b(X)=\langle \mathcal{V}_0,...,\mathcal{V}_n\rangle$ base changes to $D^b(X_{k^s})=\langle \mathcal{L}_0,\mathcal{L}_1,...,\mathcal{L}_n\rangle$ for some line bundles $\mathcal{L}_i$. From Theorem 1.2 we know that $X$ must be a Brauer--Severi variety. For the other implication see \cite{ORL}, Example 1.17. 

For the second part of the corollary, let $\mathcal{V}_0,...,\mathcal{V}_n$ be a full w-exceptional collection consisting of pure vector bundles and a $l\in\{0,...,n\}$ with $\mathrm{ind}(\mathrm{End}(\mathcal{V}_l))>1$. This implies $\mathrm{End}(\mathcal{V}_l)$ is a non-split central simple $k$-algebra. The first part of the corollary shows that $X$ must be a Brauer--Severi variety. Now let $\mathcal{L}_l$ be the line bundle on $X\otimes_k k^s\simeq \mathbb{P}^n$ for which $\mathcal{V}_l\otimes_k k^s\simeq \mathcal{L}_l^{\oplus m_l}$. Since $\mathrm{End}(\mathcal{V}_l)$ is non-split, the line bundle $\mathcal{L}_l$ does not descend to $X$. But this implies that $X$ is non-split. Indeed, this can be seen as follows: See \cite{W}, Lemma 2.3 to conclude that
\begin{eqnarray*}
\mathrm{Pic}(X)\rightarrow \mathrm{Pic}(X\otimes_k k^s)\simeq \mathbb{Z},& \mathcal{L}\mapsto \mathcal{L}\otimes_k k^s
\end{eqnarray*}
is injective. If $X$ is split, the map is obviously surjective. On the other hand, if it is surjective we get $\mathcal{O}(1)\in\mathrm{Pic}(X)$, implying $X\simeq \mathbb{P}^n$ (see \cite{AR}). So $X$ splits if and only if the above map $\mathrm{Pic}(X)\rightarrow \mathrm{Pic}(X\otimes_k k^s)$ is bijective.

For the other implication assume $X$ is a non-split Brauer--Severi variety and let $A\simeq M_m(D)$ be the corresponding central simple algebra. Then there is a full w-exceptional collection consisting of pure vector bundles $\mathcal{V}_0,...,\mathcal{V}_n$ with $\mathrm{End}(\mathcal{V}_1)\simeq D$ (see \cite{BLU} or \cite{ORL}, Example 1.17). As $X$ is non-split, $\mathrm{ind}(A)=\mathrm{ind}(D)>1$.  
\end{proof}

\section{Proof of Theorem 1.4}
By definition, one has $\mathrm{rdim}_{\mathrm{cat}}(X)\leq \mathrm{dim}(X)$. For Brauer--Severi varieties $X$ we observe the following:
\begin{prop}
Let $X$ be a Brauer--Severi variety. Then $\mathrm{rdim}_{\mathrm{cat}}(X)\leq \mathrm{ind}(X)-1$.
\end{prop}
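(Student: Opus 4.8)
The plan is to exploit Bernardara's semiorthogonal decomposition (1) together with the description of the Brauer classes of the algebras $A^{\otimes i}$ appearing in it. Write $A$ for the central simple algebra corresponding to $X$, and let $m=\mathrm{ind}(X)=\mathrm{ind}(A)=\mathrm{deg}(D)$, where $D$ is the division algebra with $A\simeq M_r(D)$. Then $\mathrm{dim}(X)=\mathrm{deg}(A)-1$ and Bernardara's decomposition reads $D^b(X)=\langle D^b(k),D^b(A),\dots,D^b(A^{\otimes\mathrm{dim}(X)})\rangle$. The key algebraic observation is that the Brauer class $[A^{\otimes i}]$ has period dividing $\mathrm{per}(A)$, which divides $m$; in particular $[A^{\otimes i}]$ is trivial whenever $m\mid i$, and more relevantly each $A^{\otimes i}$ is Brauer-equivalent to a central simple algebra whose index divides $m$. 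So each component $D^b(A^{\otimes i})$ is equivalent to $D^b(B_i)$ for a division algebra $B_i$ of degree $\mathrm{ind}(A^{\otimes i})\mid m$.

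Next I would reinterpret each $D^b(B_i)$ geometrically. A division algebra $B_i$ of index $e_i:=\mathrm{ind}(A^{\otimes i})$ is the endomorphism algebra governing the derived category of the Brauer--Severi variety $Y_i$ attached to $B_i$, which has dimension $e_i-1$. More precisely, $D^b(B_i)$ is equivalent to an admissible subcategory of $D^b(Y_i)$ (it is one of the components of Bernardara's decomposition of $D^b(Y_i)$ itself, namely $D^b(k)$ if $e_i=1$, or the first nontrivial component in general — in any case $D^b(B_i)$ sits as an admissible subcategory of $D^b(Y_i)$). Since $e_i\le m$, each $Y_i$ is a smooth projective connected variety of dimension $\le m-1=\mathrm{ind}(X)-1$. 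Feeding the collection $\{Y_0,\dots,Y_{\mathrm{dim}(X)}\}$ into the definition of representability, we conclude that $D^b(X)$ is representable in dimension $m-1$, i.e. $\mathrm{rdim}_{\mathrm{cat}}(X)\le\mathrm{ind}(X)-1$.

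The main obstacle, and the one point to be careful about, is verifying that $D^b(B_i)$ really is equivalent to an admissible subcategory of $D^b(Y_i)$ for an appropriate smooth projective connected $Y_i$ of dimension exactly $e_i-1$ — one has to make sure the "connected" and "smooth projective" hypotheses in the definition of $\mathrm{rdim}_{\mathrm{cat}}$ are met, and that the equivalence of $D^b(A^{\otimes i})$ with $D^b(B_i)$ (coming from Morita equivalence $A^{\otimes i}\simeq M_{s_i}(B_i)$) is genuinely an exact equivalence of derived categories. Both are standard: Morita equivalence induces an equivalence of bounded derived categories of finitely generated modules, and the Brauer--Severi variety of $B_i$ is smooth, projective and geometrically connected (it becomes $\mathbb{P}^{e_i-1}$ after base change to $k^s$), with $D^b(B_i)$ appearing as a component of its own Bernardara decomposition. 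Once these identifications are in place the bound follows immediately by taking the maximum of the $e_i-1\le m-1$ over the finitely many components.
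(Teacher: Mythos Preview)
Your proof is correct and follows essentially the same route as the paper: Bernardara's decomposition, Morita equivalence to pass from each $A^{\otimes i}$ to a division algebra, and then realizing each component as an admissible subcategory of the derived category of a small Brauer--Severi variety. The only (cosmetic) difference is that the paper uses a \emph{single} target variety $Y_B$ --- the Brauer--Severi variety of the division algebra $B$ with $A\simeq M_n(B)$ --- and observes that every $D^b(B^{\otimes i})$ already appears as a component of Bernardara's decomposition of $D^b(Y_B)$ itself, whereas you choose a separate $Y_i$ for each $i$ via the divisibility $\mathrm{ind}(A^{\otimes i})\mid \mathrm{ind}(A)$; both choices yield the same bound.
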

\begin{proof}
Let $A$ be the central simple algebra corresponding to $X$. By (1) we have a semiorthogonal decomposition
\begin{eqnarray*}
D^b(X)=\langle D^b(k), D^b(A),..., D^b(A^{\otimes \mathrm{dim}(X)})\rangle.
\end{eqnarray*}
According to the Wedderburn Theorem, the central simple algebra $A$ is isomorphic to $M_n(B)$ for a unique central division algebra $B$ and some $n>0$. 
Morita-equivalence gives us $D^b(A^{\otimes i})\simeq D^b(B^{\otimes i})$. 
Now let $Y_B$ be the Brauer--Severi variety corresponding to $B$. As $B$ is a division algebra, we have $\mathrm{dim}(Y_B)=\mathrm{ind}(A)-1=\mathrm{ind}(B)-1$. Note that (1) actually implies that $D^b(B^{\otimes i})$ is an admissible subcategory of $D^b(Y_B)$ for any $i$. So we immediately conclude 
\begin{eqnarray*}
\mathrm{rdim}_{\mathrm{cat}}(X)\leq \mathrm{dim}(Y_B)=\mathrm{ind}(X)-1.
\end{eqnarray*}
\end{proof}
Recall from the book \cite{GTA} that the category $\bf{dgcat}$ of all (small) dg categories and dg functors carries a Quillen model structure whose weak equivalences are Morita equivalences. Let us denote by $\mathrm{Hmo}$ the homotopy category hence obtained and by $\mathrm{Hmo}_0$ its additivization. Now to any small dg category $\mathcal{A}$ one can associate functorially its \emph{noncommutative motive} $U(\mathcal{A})$ which takes values in $\mathrm{Hmo}_0$. This functor $U\colon \bf{dgcat}\rightarrow \mathrm{Hmo}_0$ is proved to be the \emph{universal additive invariant}. An additive invariant is any functor $E\colon \bf{dgcat}$ $\rightarrow \mathcal{D}$ taking values in an additive category $\mathcal{D}$ such that
\begin{itemize}
      \item[\bf (i)] it sends derived Morita equivalences to isomorphisms,\\
			
      \item[\bf (ii)] for any pre-triangulated dg category $\mathcal{A}$ admitting full pre-triangulated dg subcategories $\mathcal{B}$ and $\mathcal{C}$ such that $H^0(\mathcal{A})=\langle H^0(\mathcal{B}), H^0(\mathcal{C})\rangle$ is a semiorthogonal decomposition, the morphism $E(\mathcal{B})\oplus E(\mathcal{C})\rightarrow E(\mathcal{A})$ induced by the inclusions is an isomorphism.
\end{itemize}			
For central simple $k$-algebras one has the following comparison theorem, which will be applied in the proof of Theorem 1.3.
\begin{thm}[\cite{TA}, Theorem 2.19]
Let $A_1,...,A_n$ and $B_1,...,B_m$ be central simple $k$-algebras, then the following are equivalent:
\begin{itemize}
     \item[\bf (i)] There is an isomorphism
			\begin{eqnarray*}
		\bigoplus^n_{i=1}U(A_i)\simeq \bigoplus^m_{j=1}U(B_j).
			\end{eqnarray*}
			
      \item[\bf (ii)] The equality $n=m$ holds and for all $1\leq i\leq n$ 
			\begin{eqnarray*}
			[B_i]=[A_{\sigma(i)}]\in \mathrm{Br}(k)
			\end{eqnarray*}
			for some permutation $\sigma$ of $\{1,...,n\}$.
			\end{itemize}
\end{thm}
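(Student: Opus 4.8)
The plan is to treat the two implications separately; the implication (ii)$\Rightarrow$(i) is formal, while (i)$\Rightarrow$(ii) carries all the content. For (ii)$\Rightarrow$(i) I would use that for central simple $k$-algebras Brauer equivalence coincides with Morita equivalence: the hypothesis $[B_i]=[A_{\sigma(i)}]$ means $B_i$ and $A_{\sigma(i)}$ are Morita equivalent, hence derived Morita equivalent, so property \textbf{(i)} of the universal additive invariant gives $U(B_i)\simeq U(A_{\sigma(i)})$. Summing over $i$ and reindexing by $\sigma$ yields the isomorphism in (i).

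For the converse I would first make the morphisms in $\mathrm{Hmo}_0$ between motives of central simple algebras explicit. One has $\mathrm{Hom}_{\mathrm{Hmo}_0}(U(A),U(B))\simeq K_0(A^{\mathrm{op}}\otimes_k B)$, and since $A^{\mathrm{op}}\otimes_k B$ is again central simple of class $[B]-[A]$, this group is $\mathbb{Z}$, generated by the class of its unique simple module. A direct bimodule computation then shows that composing the two canonical generators $U(A)\to U(B)\to U(A)$ equals $\mathrm{ind}([B]-[A])^2\cdot\mathrm{id}_{U(A)}$ (for instance with $A=k$ and $B=D$ a division algebra of index $d$ one gets $D\otimes_D D=D$, of $k$-dimension $d^2$). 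In particular a morphism $U(A)\to U(B)$ can belong to an isomorphism only when $\mathrm{ind}([B]-[A])=1$, i.e.\ $U(A)\simeq U(B)\iff [A]=[B]$, recovering the Morita statement intrinsically. Applying the additive functor $\mathrm{Hom}_{\mathrm{Hmo}_0}(U(k),-)$ to the isomorphism in (i) gives $\mathbb{Z}^n\simeq\mathbb{Z}^m$, hence $n=m$.

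To begin matching Brauer classes I would extract an integral invariant from the isomorphism. Write $M=\bigoplus_i U(A_i)$ and fix a division algebra $D_\beta$ of class $\beta$. The composition pairing $\mathrm{Hom}(U(D_\beta),M)\times\mathrm{Hom}(M,U(D_\beta))\to\mathrm{End}(U(D_\beta))=\mathbb{Z}$ is, in the generator bases, diagonal with entries $\mathrm{ind}([A_i]-\beta)^2$, since a generator $U(D_\beta)\to U(A_i)$ and a generator $U(A_{i'})\to U(D_\beta)$ compose to an endomorphism of $U(D_\beta)$ only when $i=i'$. Its discriminant group $\bigoplus_i\mathbb{Z}/\mathrm{ind}([A_i]-\beta)^2$ is therefore an isomorphism invariant of $M$, so (i) together with $n=m$ yields $\bigoplus_i\mathbb{Z}/\mathrm{ind}([A_i]-\beta)^2\simeq\bigoplus_j\mathbb{Z}/\mathrm{ind}([B_j]-\beta)^2$ for every $\beta$. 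Taking $p$-ranks and letting $\beta$ run over $\mathrm{Br}(k)$ gives $\#\{i:[A_i]_p=\delta\}=\#\{j:[B_j]_p=\delta\}$ for every prime $p$ and every $p$-primary class $\delta$; that is, all $p$-primary marginals of the two multisets agree.

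The hard part is upgrading this marginal agreement to equality of the full multisets $\{[A_i]\}$ and $\{[B_j]\}$ in $\mathrm{Br}(k)$, and this step is genuinely integral: after $-\otimes_{\mathbb{Z}}\mathbb{Q}$ every $U(A)$ becomes isomorphic to $U(k)$ (rescale the generators so that the round-trip scalar $\mathrm{ind}^2$ becomes $1$), so no rational invariant can detect the Brauer class, and the marginals alone do not pin down the joint distribution. I would resolve this by descending induction on the index, base changing along the function field of the Brauer--Severi variety of a division algebra $D_{\beta_0}$: by Amitsur's theorem this kills exactly $\langle\beta_0\rangle$, hence merges the classes that differ by $\langle\beta_0\rangle$ and strictly lowers the complexity, while the isomorphism in (i) base-changes to an isomorphism over the larger field, to which the induction hypothesis applies. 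The main obstacle is organizing this induction so that the complexity strictly decreases and then assembling the resulting quotient matchings, together with the $p$-primary marginals above, into the full equality of multisets; this integral bookkeeping is the real content of the theorem.
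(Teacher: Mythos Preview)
The paper does not give its own proof of this statement; it is quoted verbatim from \cite{TA} (Tabuada--Van den Bergh, Theorem~2.19) and used as a black box in the proof of Theorem~1.4. There is therefore no in-paper argument to compare your proposal against.

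On your proposal itself: the implication (ii)$\Rightarrow$(i) and the extraction of $n=m$ via $\mathrm{Hom}_{\mathrm{Hmo}_0}(U(k),-)$ are correct, and your round-trip computation that the composite of generators $U(A)\to U(B)\to U(A)$ equals $\mathrm{ind}([B]-[A])^{2}\cdot\mathrm{id}$ is right (one checks it by computing $\dim_k(Q\otimes_B P)$ for the simple bimodules $P,Q$). The cokernel of the composition pairing is then a genuine invariant of $M$, and taking $p$-ranks does give, for every prime $p$ and every $p$-primary class $\delta$, the equality $\#\{i:[A_i]_p=\delta\}=\#\{j:[B_j]_p=\delta\}$. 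So far so good.

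The gap is exactly where you locate it. Knowing all $p$-primary marginals does \emph{not} determine the multiset of Brauer classes: already in $\mathbb{Z}/p\times\mathbb{Z}/q\subset\mathrm{Br}(k)$ the multisets $\{(1,0),(0,1)\}$ and $\{(0,0),(1,1)\}$ have identical $p$- and $q$-marginals, and they also have identical images in every cyclic quotient, so your Amitsur base-change idea (which by induction only returns the multiset modulo $\langle\beta_0\rangle$) combined with the marginals still cannot separate them. Thus the proposed induction, as organized, does not close. The argument in \cite{TA} is structured differently and does not proceed via this marginal/quotient bookkeeping; if you want a complete proof you should consult that reference rather than try to push the present outline through.
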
	

\begin{thm2}[Theorem 1.4]
Let $X$ be a Brauer--Severi variety with $\mathrm{ind}(X)\leq 3$. Then $\mathrm{rdim}_{\mathrm{cat}}(X)=\mathrm{ind}(X)-1$.
\end{thm2}
\begin{proof}
According to \cite{NO2} a Brauer--Severi variety $X$ is split if and only if $\mathrm{rdim}_{\mathrm{cat}}(X)=0$ if and only if $\mathrm{ind}(X)-1=0$. This covers the case $\mathrm{ind}(X)=1$. So it remains to prove the assertion for $r:=\mathrm{ind}(X)\in\{2,3\}$. For this, let $A$ be the central simple algebra corresponding to $X$. 
By Proposition 4.1 one has $\mathrm{rdim}_{\mathrm{cat}}(X)\leq \mathrm{ind}(X)-1=r-1$. Assume by contradiction that $\mathrm{rdim}_{\mathrm{cat}}(X)< r-1$. So for $r=2$ this means $\mathrm{rdim}_{\mathrm{cat}}(X)=0$ which gives a contradiction, as $A$ is by assumption non-split. For $n=3$, $\mathrm{rdim}_{\mathrm{cat}}(X)< r-1$ means $\mathrm{rdim}_{\mathrm{cat}}(X)\leq 1$. But $\mathrm{rdim}_{\mathrm{cat}}(X)=0$ gives a contradiction, since $X$ is non-split. Below we prove that $\mathrm{rdim}_{\mathrm{cat}}(X)=1$ also gives a contradiction. 

Now by \cite{AB1}, Proposition 6.1.6 and 6.1.10 we conclude that if $\mathrm{rdim}_{\mathrm{cat}}(X)=1$ there must be a semiorthogonal decomposition of $D^b(X)$ whose components are either $D^b(K)$, where $K/k$ is a (finite) separable extension, or $D^b(D)$, where $D$ is a central division algebra with $\mathrm{ind}(D)\in\{1,2\}$, or $D^b(C)$, where $C$ is a smooth $k$-curve of positive genus. Note that $D^b(C)$ cannot be present either because $K_0(X)$ is torsion free, or because $HH_1(X)=0$. Remembering the semiorthogonal decomposition (1), we see that the noncommutative motive $U(\mathrm{perf}(X))$ decomposes as
\begin{eqnarray}
U(\mathrm{perf}(X))&\simeq&\bigoplus^n_{i=1}U(K_i)\oplus \left(\bigoplus^m_{j=1}U(D_j)\right)\\
									 &\simeq&U(k)\oplus U(A)\oplus...\oplus U(A^{\otimes \mathrm{dim}(X)})
\end{eqnarray}
for suitable central division algebras $D_j$ with $\mathrm{ind}(D_j)\in\{1,2\}$ and suitable separable extensions $K_i$.
Note that $K_0(X)\simeq \mathbb{Z}^{\oplus (\mathrm{dim}(X)+1)}$ and therefore $n+m=\mathrm{dim}(X)+1$. After base change to $\bar{k}$ we also have $K_0(X_{\bar{k}})\simeq \mathbb{Z}^{\oplus (\mathrm{dim}(X)+1)}$. 
Now for any $D^b(K_i)$ we obtain after base change $D^b(K_i)_{\bar{k}}\simeq \prod^{r_i}_{q=1}D^b(\bar{k})$.
Hence $\sum^n_{i=1}{r_i}+m=\mathrm{dim}(X)+1$. But this implies $\sum^n_{i=1}{r_i}=n$ and therefore $r_i=1$. The above isomorphisms (2) and (3) then give
\begin{eqnarray*}
\bigoplus^n_{i=1}U(k)\oplus \left(\bigoplus^m_{j=1}U(D_j)\right) \simeq U(k)\oplus U(A)\oplus...\oplus U(A^{\otimes \mathrm{dim}(X)}).
\end{eqnarray*} 
Then by Theorem 2.19 we conclude that $A$ must be split or that $A$ must be Brauer-equivalent to $D_j$ for some $j$. This contradicts $r=3$ and completes the proof.

\end{proof}
\begin{cor}
Let $X$ be a Brauer--Severi variety of dimension $\leq 2$. Then $\mathrm{rdim}_{\mathrm{cat}}(X)=\mathrm{ind}(X)-1$.
\end{cor}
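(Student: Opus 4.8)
The plan is to reduce the corollary to Theorem 1.4 by analyzing the possible indices of a Brauer--Severi variety of low dimension. Recall that if $X$ corresponds to a central simple algebra $A = M_n(D)$ with $D$ the associated division algebra, then $\dim(X) = \deg(A) - 1$ and $\mathrm{ind}(X) = \deg(D)$. Moreover $\mathrm{ind}(X) \mid \deg(A)$, so $\mathrm{ind}(X)$ divides $\dim(X) + 1$. I would first dispose of the case $\dim(X) \le 1$: if $\dim(X) = 0$ then $X = \mathrm{Spec}(k)$ and $\mathrm{ind}(X) = 1$, giving $\mathrm{rdim}_{\mathrm{cat}}(X) = 0 = \mathrm{ind}(X)-1$ trivially; if $\dim(X) = 1$, then $\deg(A) = 2$, so $\mathrm{ind}(X) \in \{1,2\}$, and in either case $\mathrm{ind}(X) \le 3$, so Theorem 1.4 applies directly. (Alternatively, Corollary 2.5 already handles the curve case.)

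The main case is $\dim(X) = 2$, so $\deg(A) = 3$ and $\mathrm{ind}(X)$ divides $3$, hence $\mathrm{ind}(X) \in \{1, 3\}$. In both subcases $\mathrm{ind}(X) \le 3$, so Theorem 1.4 immediately gives $\mathrm{rdim}_{\mathrm{cat}}(X) = \mathrm{ind}(X) - 1$. Thus the corollary follows purely formally: every Brauer--Severi variety of dimension at most $2$ automatically has index at most $3$, because the index divides the degree $\dim(X)+1 \le 3$.

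The key step, then, is the elementary number-theoretic observation that $\mathrm{ind}(X) \mid \dim(X) + 1$ combined with $\dim(X) \le 2$ forces $\mathrm{ind}(X) \le 3$; everything else is an appeal to Theorem 1.4. I do not anticipate any real obstacle here, since there is no new content beyond what Theorem 1.4 provides — the corollary is just the specialization of the index bound to the geometric condition $\dim(X) \le 2$. If one wanted to be completely explicit, one could remark that the only non-split case of dimension $2$ is the Brauer--Severi surface attached to a division algebra of degree $3$ (equivalently period and index both $3$), for which Theorem 1.4 yields $\mathrm{rdim}_{\mathrm{cat}}(X) = 2$, confirming the conjecture in this instance as well.
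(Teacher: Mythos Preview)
Your proposal is correct and follows exactly the implicit reasoning of the paper: the corollary is stated without proof immediately after Theorem 1.4, since $\mathrm{ind}(X)\mid \dim(X)+1\leq 3$ forces $\mathrm{ind}(X)\leq 3$, at which point Theorem 1.4 applies directly. There is nothing to add.
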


\begin{cor}
Let $X$ be a Brauer--Severi curve or surface. Then $X$ is non-split if and only if $\mathrm{rdim}_{\mathrm{cat}}(X)=\mathrm{dim}(X)$.
\end{cor}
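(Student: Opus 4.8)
The plan is to deduce this immediately from Theorem 1.4 (equivalently Corollary 4.3) together with the elementary numerology of the index in low dimensions. Recall that if $X$ is a Brauer--Severi variety corresponding to a central simple algebra $A$, then $\dim(X)=\deg(A)-1$, the index $\mathrm{ind}(X)=\mathrm{ind}(A)$ divides $\deg(A)$, and $X$ is split if and only if $A$ is split, i.e. if and only if $\mathrm{ind}(X)=1$. So the whole point is to observe that, for $\dim(X)\in\{1,2\}$, the only possible values of $\mathrm{ind}(X)$ are $1$ and $\dim(X)+1$, which forces ``$\mathrm{ind}(X)-1=\dim(X)$'' to be equivalent to ``$X$ non-split''.

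First I would treat the curve case. Here $\deg(A)=\dim(X)+1=2$, so $\mathrm{ind}(X)$ divides $2$, and since $2$ is prime $\mathrm{ind}(X)\in\{1,2\}$. Thus $X$ is non-split precisely when $\mathrm{ind}(X)=2$, i.e. when $\mathrm{ind}(X)-1=1=\dim(X)$. Since $\mathrm{ind}(X)\le 2\le 3$, Theorem 1.4 applies and gives $\mathrm{rdim}_{\mathrm{cat}}(X)=\mathrm{ind}(X)-1$; combining the two equivalences, $X$ is non-split if and only if $\mathrm{rdim}_{\mathrm{cat}}(X)=\dim(X)$. The surface case is identical in spirit: now $\deg(A)=3$, so $\mathrm{ind}(X)$ divides the prime $3$ and hence lies in $\{1,3\}$. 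Therefore $X$ is non-split if and only if $\mathrm{ind}(X)=3$ if and only if $\mathrm{ind}(X)-1=2=\dim(X)$; since $\mathrm{ind}(X)\le 3$, Corollary 4.3 yields $\mathrm{rdim}_{\mathrm{cat}}(X)=\mathrm{ind}(X)-1$, and the claim follows.

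There is essentially no obstacle here, since all the substance is already contained in Theorem 1.4 and Corollary 4.3. The only step that requires a moment's care is the combinatorial observation that for $\dim(X)\in\{1,2\}$ one has $\mathrm{ind}(X)\in\{1,\dim(X)+1\}$, which is exactly what makes the equality $\mathrm{rdim}_{\mathrm{cat}}(X)=\dim(X)$ detect non-splitness; this uses that $2$ and $3$ are prime and so fails in higher dimension (for instance a fourfold with index $2$ satisfies $\mathrm{rdim}_{\mathrm{cat}}(X)=1<4$ while being non-split). It may be worth remarking on this point to explain why the statement is genuinely special to curves and surfaces.
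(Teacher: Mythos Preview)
Your argument is correct and follows essentially the same route as the paper: both proofs invoke Theorem 1.4 (or Corollary 4.3) to identify $\mathrm{rdim}_{\mathrm{cat}}(X)$ with $\mathrm{ind}(X)-1$, and then use the numerology that for $\deg(A)\in\{2,3\}$ the index is either $1$ or $\dim(X)+1$. The only cosmetic difference is that the paper handles the backward implication by citing an external result (split $\Rightarrow$ $\mathrm{rdim}_{\mathrm{cat}}(X)=0$) rather than rereading it off Theorem 1.4 as you do; your uniform treatment is arguably cleaner.
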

\begin{proof}
Theorem 1.4 yields $\mathrm{rdim}_{\mathrm{cat}}(X)=\mathrm{ind}(X)-1$. If $X$ is non-split we have $\mathrm{ind}(X)-1=\mathrm{dim}(X)$. For the other implication assume $X$ is split. Then \cite{NO2}, Proposition 5.1 yields $\mathrm{rdim}_{\mathrm{cat}}(X)=0\neq \mathrm{dim}(X)$.
\end{proof}
\begin{rema}
\textnormal{It is worth to mention that $\mathrm{rdim}_{\mathrm{cat}}(X)=\mathrm{ind}(X)-1$ cannot hold in general. Counterexamples can be find for instance in \cite{AB}, table 3 on page 27. It is not yet clear whether there is indeed a numerical relation between $\mathrm{rdim}_{\mathrm{cat}}(X)$ and the index respectively the period.}
\end{rema}

{\small MATHEMATISCHES INSTITUT, HEINRICH--HEINE--UNIVERSIT\"AT 40225 D\"USSELDORF, GERMANY}\\
E-mail adress: novakovic@math.uni-duesseldorf.de

\end{document}